\documentclass{amsart}
\allowdisplaybreaks[1]
\usepackage{mathtools} \mathtoolsset{showonlyrefs}
\usepackage{verbatim}
\usepackage{enumitem}

\newtheorem{theorem}{Theorem}
\newtheorem{lemma}[theorem]{Lemma}
\newtheorem{corollary}[theorem]{Corollary}
\newtheorem{question}[theorem]{Question}

\renewcommand {\c}{\operatorname{c}}
\newcommand {\vol}[2][n]{\operatorname{vol}_{#1}\left(#2\right)}
\newcommand {\cat}{\operatorname{cat}}

\newcommand{\R}{\mathbb R}
\newcommand{\Z}{\mathbb Z}

\newcommand{\sign}{\operatorname{sign}}


\begin{document}

\title[Centroids of sections of convex bodies]{Centroids of sections of convex bodies and Lusternik-Schnirelmann category}

\author{
  J.~Haddad,
    C.~H.~Jiménez,
    R.~Villa
}
\date{\today}

\begin{abstract}
Given two symmetric convex bodies $L \subseteq K \subseteq \R^n$ with $L$ strictly convex, we prove that there exist at least $n$ hyperplanes $H$ tangent to $L$, such that the center of mass of $H \cap K$ belongs to $\partial L$.

The theorem makes use of Lusternik-Schnirelmann category theory.
\end{abstract}

\maketitle


\section*{Introduction and main results}

The purpose of this short note is to draw attention to a useful tool from differential topology, that can be employed to solve some interesting problems in the domain of convex geometry, provided they have a variational structure.

The study of centroids and related geometric functionals has long provided a meeting ground between convex geometry, analysis, and topology. Problems involving the location of the centroid of sections or projections often reveal deep structural properties of convex bodies, and connect naturally with questions of symmetry, uniqueness, and stability. In many situations, these problems can be rephrased in variational form, where topological tools become effective in guaranteeing existence or multiplicity of solutions. This variational viewpoint, which treats the centroid as a critical point of an associated functional on the sphere or on a family of hyperplanes, serves as the common framework behind several classical and modern results. In what follows, we recall some of these problems and the progress that has been made toward their resolution.

Let $K\subset \R^n$ be a convex body, (a convex, compact set with non-empty interior). The centroid, center of mass or barycenter of $K$ is defined as 
\[c(K)=\frac 1{\vol{K}} \int_K x dx,\]
where $\vol{K}$ denotes de $n$-dimensional volume (Lebesgue measure) of $K$.
If $H \subseteq \R^n$ is an affine hyperplane, we will also consider the center of mass of the section $K \cap H$, defined by
\[c(K \cap H) = \frac 1{\vol[n-1]{K \cap H}} \int_{K \cap H} x dx,\]
where integration is with respect to the $(n-1)$ dimensional Lebesgue measure in $H$.

%
In \cite{grunbaum1961} and \cite{fenchelproc} Grünbaum and Löwner asked the following questions:

\begin{question}[Grünbaum]\label{quest_grun}
Is the centroid $c(K)$ of a convex body $K\subset \R^n$ the centroid of at least $n+1$ different $(n-1)$-dimensional sections of $K$ through $c(K)$? 
\end{question} 

\begin{question}[Löwner]\label{quest_low} Let $\mu(K)$ be the number of hyperplane sections of $K$ passing through $c(K)$ whose centroid is the same as $c(K)$. Let $$\mu(n)=\min_{K\in\mathcal{K}^n}\mu(K),$$ what is the value of $\mu(n)$?.

\end{question}
Let us note that these problems only make sense for non symmetric convex bodies. For any symmetric convex body $K$ (this is, if $K=-K$), both the centroid of $K$ and of any central section of $K$ are at the origin.

Question \ref{quest_grun} was answered positively by Grünbaum and Löwner for $n=2$ (see also \cite{patakova2022barycentric} for a discussion in dimension $3$). 
In a recent paper Myroshnichenko, Tatarko and Yaskin \cite{myroshnychenko2025answers} showed the existence of a convex body of revolution in $\R^n$ for $n\geq 5$ such that its centroid coincides with the centroid of exactly one section, thus answering Question \ref{quest_grun} in the negative. The question for $n=3,4$ remains open.
In \cite{patakova2022barycentric} it was shown that for each $K\subset \R^n$ for $n\geq3$ there exists a point $p$ in the interior of $K$ which is the centroid of $4$ hyperplane sections passing through $p$.
The existence of a point in dimension $n\geq 4$ which is the centroid of $n+1$ hyperplane sections, is unknown.

Sections of convex bodies not passing through the origin were considered in many uniqueness problems.
The famous question of Barker and Larman \cite{barker2001determination} asks whether the size of the sections at a fixed distance from the origin determine the body.
More generally they ask:
\begin{question}[Barker-Larman]
	\label{quest_bar}
Let $K$ and $L$ be convex bodies in $\R^n$ containing a convex body $M$ in their interiors. Suppose that for every hyperplane $H$ that supports $M$ we have
	\[\vol[n-1]{K \cap H} = \vol[n-1]{L \cap H}.\]
Does this imply that $K=L$?
\end{question}
Partial results were obtained by the same authors in \cite{barker2001determination}, Santaló \cite{santalo1951two}, Yaskin \cite{yaskin2015thick}, Yaskin and Yaskina \cite{yaskin2011unique}, and Xiong, Ma and Cheung \cite{xiong2008determination}.
Question \ref{quest_bar} is open in general, even for $n=2$.

The connection between convex geometry and topology has been shown in some results around Questions \ref{quest_grun} and \ref{quest_low}. In the work of Patáková, Tancer, and Wagner \cite{patakova2022barycentric}, who revisited Grünbaum's question on barycentric cuts through a convex body by analyzing the \textit{depth function} on the unit sphere $S^{n-1}$. Their approach relied on \textit{differential-topological methods}, 
interpreting barycentric hyperplanes as \textit{critical points} of a $C^1$ function. Using arguments inspired by \textit{Morse theory}, they established the existence of multiple barycentric cuts through a point of maximal depth. In contrast, earlier results by Blagojević \cite{blagojevic2016local} and Karasev \cite{karasev2011geometric} had employed \textit{algebraic-topological techniques}, using \textit{cohomological and characteristic-class methods}---notably \textit{Stiefel--Whitney classes}---to derive multiplicity results through \textit{Borsuk--Ulam--type arguments}. In this framework, one studies equivariant maps between manifolds (such as projective spaces and spheres) and applies cohomological obstructions to prove the existence of symmetric geometric configurations.

Complementing these developments, our work provides an \textit{alternative approach based on the Lusternik--Schnirelmann (LS) category}, which serves as a \textit{homotopy-invariant measure of topological complexity} and offers a conceptual parallel to classical Morse theory. 
As in the approach of Patáková--Tancer--Wagner, we work within a mildly smooth setting, asking only $C^1$ functions, but employ the LS category to estimate the \textit{minimal number of critical points} of smooth functions arising in geometric constructions. Recall that the LS category $\mathrm{cat}(X)$ of a topological space $X$ is defined as the smallest number of
closed
subsets that are contractible in $X$ and whose union covers $X$; it is invariant under homotopy equivalence and provides a lower bound for the number of critical points of any smooth function on $X$.

In this work we obtain a property for a pair of symmetric, convex bodies in $\R^n$ about centroids of non-central sections, in the spirit of the questions of Grünbaum and Löwner.

We say that a convex body is strictly convex if each supporting hyperplane touches the boundary at a single point.
Our main theorem is:
\begin{theorem}
	\label{thm_main}
	Let $K,L \subseteq \R^n$ be symmetric, convex bodies with $L \subseteq K$ and assume $L$ is strictly convex.
	Then there exist at least $n$ distinct pairs of supporting hyperplanes $H$ of $L$
	such that $\c(K \cap H) \in \partial L$.
\end{theorem}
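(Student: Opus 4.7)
The plan is to reduce the statement to a Lusternik--Schnirelmann critical-point count on $\mathbb{RP}^{n-1}$, exactly as advertised in the introduction. Since $L$ is strictly convex and symmetric, every supporting hyperplane is uniquely determined by its outer unit normal $u \in S^{n-1}$: write
\[ H_u = \{x \in \R^n : \langle x, u\rangle = h_L(u)\}, \]
with unique contact point $z_u = \nabla h_L(u) \in \partial L$. Strict convexity of $L$ is precisely the condition making $h_L$ of class $C^1$ on $\R^n \setminus \{0\}$. Introduce the cap-volume functional
\[ F(u) = \vol{K \cap \{x : \langle x, u\rangle \geq h_L(u)\}}, \qquad u \in S^{n-1}. \]
The central symmetries $K = -K$, $L = -L$, combined with $h_L(-u) = h_L(u)$ and the substitution $x \mapsto -x$, yield $F(-u) = F(u)$, so $F$ descends to a $C^1$ function $\bar F : \mathbb{RP}^{n-1} \to \R$.

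The heart of the argument is the identification of critical points of $\bar F$ with the configurations sought. Differentiating $F$ in a tangent direction $v \in T_u S^{n-1}$ using the first-variation formula for the moving boundary $\phi(x, u) = \langle x, u\rangle - h_L(u) = 0$ gives
\[ dF(u)[v] = \int_{K \cap H_u} \bigl(\langle x, v\rangle - \langle z_u, v\rangle\bigr)\,dH^{n-1}(x) = \vol[n-1]{K \cap H_u} \,\bigl\langle \c(K \cap H_u) - z_u,\, v\bigr\rangle. \]
Both $\c(K \cap H_u)$ and $z_u$ lie in the affine hyperplane $H_u$, which is orthogonal to $u$, so their difference is already tangent to $S^{n-1}$ at $u$. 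Hence $dF(u)$ vanishes on $T_u S^{n-1}$ iff $\c(K \cap H_u) = z_u$; and since $z_u$ is the unique point of $\partial L$ lying on $H_u$ (again by strict convexity of $L$), this is exactly the condition $\c(K \cap H_u) \in \partial L$.

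Finally, invoking $\cat(\mathbb{RP}^{n-1}) = n$ and the standard LS bound that any $C^1$ function on a closed manifold $X$ has at least $\cat(X)$ critical points, we conclude that $\bar F$ has at least $n$ critical points. These lift to $n$ distinct antipodal pairs on $S^{n-1}$, and hence to $n$ distinct pairs of parallel supporting hyperplanes $\{H_u, H_{-u}\}$ of $L$ with the desired centroid property.

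The main technical obstacle is verifying that $F$ is honestly of class $C^1$ on all of $S^{n-1}$ and controlling the degenerate directions where $H_u$ happens to also support $K$, so that $K \cap H_u$ collapses to a lower-dimensional face of $K$ and $dF(u)$ vanishes for trivial reasons without implying the centroid identity. The natural workaround is to first establish the theorem under the stronger hypothesis $L \subset \mathrm{int}(K)$---where the implicit-function theorem applied to the moving hyperplane makes $F$ manifestly $C^1$ and strictly positive---and then deduce the general case by approximating $L$ from inside via $(1-\varepsilon) L \subset \mathrm{int}(K)$ and extracting limits of the resulting configurations by compactness of $S^{n-1}$.
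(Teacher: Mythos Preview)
Your proposal is correct and follows essentially the same route as the paper: the cap-volume functional $F(u)=\vol{\{x\in K:\langle x,u\rangle\ge h_L(u)\}}$ is precisely the paper's $V_{K,h_L}$, the first-variation formula you write matches the paper's Corollary on $\nabla V_{K,h}$, and the critical-point count via $\cat(\mathbb{RP}^{n-1})$ is the same Lusternik--Schnirelmann argument. Your flagging of the degenerate case where $H_u$ also supports $K$ (so $\vol[n-1]{K\cap H_u}=0$) and the proposed $(1-\varepsilon)L$ approximation is in fact more careful than the paper, which tacitly works as if $L\subset\mathrm{int}(K)$ and does not address this boundary situation explicitly.
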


We obtain the following two corollaries, by taking $L$ and $K$ to be Euclidean balls, respectively:
\begin{corollary}
	\label{cor_normal}
	Let $L \subseteq \R^n$ be a symmetric, strictly convex body.
	Then there exist $n$ distinct pairs of points $x \in \partial L$ for which the supporting hyperplane is orthogonal to $x$.

	Equivalently, there exist $n$ orthogonal projections onto $1$-dimensional subspaces $P_i:\R^n \to V_i$ for which $P_i(L) \subseteq L$.
\end{corollary}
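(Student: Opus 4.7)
The plan is to deduce the corollary directly from Theorem~\ref{thm_main} by taking $K=RB_2^n$, a centered Euclidean ball of radius $R$ large enough to satisfy $L\subseteq K$. Since $L$ is symmetric, strictly convex, and contained in $K$, the hypotheses of Theorem~\ref{thm_main} apply, and the theorem produces $n$ distinct pairs of supporting hyperplanes $H$ of $L$ for which $\c(K\cap H)\in\partial L$.

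The heart of the argument is the identification of $\c(K\cap H)$. Because $K$ is a centered Euclidean ball, the section $K\cap H$ is a round $(n-1)$-dimensional disk in $H$ centered at the foot $q_H$ of the perpendicular from $0$ to the affine hyperplane $H$, so by rotational symmetry its centroid coincides with $q_H$. Strict convexity of $L$ forces $H$ to meet $\partial L$ in a single point $x_H$. Combining these two facts, $q_H=\c(K\cap H)\in\partial L\cap H=\{x_H\}$, whence $q_H=x_H$, and the defining property of $q_H$ says precisely that $H\perp x_H$. This yields the $n$ pairs $\pm x_H$ in the first half of the statement.

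The second formulation is a routine translation. For $x\in\partial L$ whose supporting hyperplane is perpendicular to $x$, the support inequality $\langle y,x\rangle\le\langle x,x\rangle$, together with $L=-L$, gives $|\langle y,x\rangle|\le|x|^2$ for every $y\in L$, so the orthogonal projection $P$ onto $\operatorname{span}(x)$ sends $L$ into the segment $[-x,x]\subseteq L$. Conversely, if $P(L)\subseteq L$ for the projection onto a line $\R u$ with $|u|=1$, then $h_L(u)\,u\in L$ is forced to lie on $\partial L$, and its unique supporting hyperplane (by strict convexity) is $\{y:\langle y,u\rangle=h_L(u)\}$, which is orthogonal to $u$.

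I expect no genuine obstacle. The only delicate step is the pinning $\c(K\cap H)=x_H$, which blends the rotational symmetry of $K$ (placing the centroid at $q_H$) with the strict convexity of $L$ (reducing $\partial L\cap H$ to a single candidate). Everything else is bookkeeping between normal vectors, supporting hyperplanes, and projections onto their spans.
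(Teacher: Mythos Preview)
Your argument is correct and matches the paper's intended approach: the paper obtains this corollary simply ``by taking $K$ to be a Euclidean ball'' in Theorem~\ref{thm_main}, and you have filled in exactly the omitted details (that the centroid of a spherical cap section is the foot of the perpendicular, hence coincides with the unique tangency point on $\partial L$, forcing orthogonality). The translation to the projection formulation is also handled correctly.
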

In a similar result \cite[Proposition 2.1]{haddad2025higher}, in the non-symmetric case, at least one (not necessarily orthogonal) projection is obtained. This result also makes use of topological arguments, namely the Brouwer topological degree.

\begin{corollary}
	\label{cor_normal}
	Let $K \subseteq \R^n$ be a symmetric convex body.
	Then for every $t>0$ which is less than the inradius of $K$, there exist $n$ distinct directions $v_1, \ldots, v_n \in S^{n-1}$ for which
	\[\c(K \cap \{x: \langle x, v_i \rangle = t\}) = t v_i.\]
\end{corollary}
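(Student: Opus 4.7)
\medskip

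The plan is to deduce this corollary as a direct specialization of Theorem \ref{thm_main}, taking $L$ to be a Euclidean ball centered at the origin.

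First I would set $L = tB_2^n$, where $B_2^n$ denotes the unit Euclidean ball. By hypothesis $t$ is less than the inradius of $K$, so $L \subseteq K$. The body $L$ is symmetric and strictly convex, so the hypotheses of Theorem \ref{thm_main} are all met. The supporting hyperplanes of $L$ are exactly the hyperplanes of the form
\[H_v = \{x \in \R^n : \langle x, v \rangle = t\}, \qquad v \in S^{n-1},\]
and the antipodal pair associated to $v$ is $\{H_v, H_{-v}\}$. Applying Theorem \ref{thm_main} yields $n$ distinct such pairs, which in particular gives $n$ distinct unit vectors $v_1, \ldots, v_n$ (no two equal up to sign) such that $\c(K \cap H_{v_i}) \in \partial L$ for every $i$.

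The final step is a one-line Cauchy--Schwarz argument: setting $w := \c(K \cap H_{v_i})$, the inclusion $w \in H_{v_i}$ gives $\langle w, v_i \rangle = t$, while $w \in \partial L = tS^{n-1}$ gives $\|w\| = t$. Since $\langle w, v_i \rangle = \|w\| \|v_i\|$, equality in Cauchy--Schwarz forces $w = t v_i$, which is exactly the required identity. I do not foresee any serious obstacle, as all the geometric content is already packaged inside Theorem \ref{thm_main}; the only point to check carefully is that the $n$ pairs of supporting hyperplanes of $tB_2^n$ correspond bijectively to $n$ antipodal pairs of directions in $S^{n-1}$, which is immediate.
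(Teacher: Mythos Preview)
Your proposal is correct and follows exactly the approach indicated in the paper, which simply says that the corollary is obtained from Theorem~\ref{thm_main} by taking $L$ to be a Euclidean ball. Your Cauchy--Schwarz step is a clean way to phrase the identification of the tangent point; equivalently, one can just note that $H_{v_i}\cap\partial(tB_2^n)=\{t v_i\}$, since a supporting hyperplane to a Euclidean ball touches it at a single point.
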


\section{Proof of Theorem \ref{thm_main}}

To prove Theorem \ref{thm_main}, we restate the problem in variational terms.
The idea is to define a suitable smooth functional on the sphere $S^{n-1}$ whose critical points correspond to hyperplanes tangent to $L$ for which the centroid of the section $K \cap H$ lies on $\partial L$.
This formulation transforms the geometric problem into one of locating critical points of a smooth functional on the sphere.
The remainder of the proof proceeds by analyzing this functional using Lusternik--Schnirelmann theory.
In particular, the topology of the real projective space provides the necessary lower bound on the number of critical points, ensuring the existence of the required tangent hyperplanes. We begin by fixing notation and recalling basic smoothness properties of the functions involved.

We will say that a real function defined in the Euclidean sphere $f:S^{n-1} \to \R$ is $C^1$ if it is the restriction of a real $C^1$ function defined in an open subset of $\R^n$, containing $S^{n-1}$.
The gradient of the restriction of $f$ to $S^{n-1}$ (as a smooth function on a Riemannian manifold) at a point $x \in S^{n-1}$ is
\begin{equation}
	\label{eq_def_crit}
	\nabla (\left. f \right|_{S^{n-1}})(x) = P_{x^\perp}[\nabla f(x)],
\end{equation}
where $P_{x^\perp}$ denotes the orthogonal projection onto the hyperplane orthogonal to $x$, and $\nabla f(x)$ denotes the usual gradient, as a function defined in an open set of $\R^n$.
Thus, a critical point of $\left. f \right|_{S^{n-1}}$ is a point $x \in S^{n-1}$ for which $\nabla f(x)$ is parallel to $x$.

Let $f:S^{n-1} \to \R$ be a $C^1$ function which is even. If $x$ is a critical point of $f$, then clearly $-x$ is also a critical point. We will refer to $\{x, -x\}$ as a pair of critical points.

The general theory of Lusternik-Schnirelmann category (see \cite{cornea2003lusternik}, in particular Theorem 1.15 or Theorem 21 in \cite{oprea2014applications}) states that a smooth function on a smooth manifold $M$ has at least $\cat(M)+1$ distinct critical points.
Its is known (see \cite[Example 1.8]{cornea2003lusternik} or Theorem 3 in \cite{oprea2014applications}) that the real projective $n$ space $\R P^n$ satisfies $\cat(\R P^n) = n$.

Since every real even smooth function on $S^{n-1}$ can be factored through the quotient $S^{n-1}/\Z_2 = \R P^{n-1}$, one can deduce the following theorem which is the celebrated Lusternik-Schnirelmann Theorem.
\begin{theorem}
	\label{thm_LS}
	Let $f:S^{n-1} \to \R$ be a $C^1$ even function, then $f$ has at least $n$ distinct pairs of critical points.
\end{theorem}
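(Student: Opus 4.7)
The plan is to push the problem down to the quotient $\mathbb R P^{n-1}$ and then invoke the two ingredients that have already been stated: the general Lusternik--Schnirelmann bound $\#\{\text{critical points}\}\geq \cat(M)+1$ on a smooth manifold $M$, and the computation $\cat(\mathbb R P^{n-1})=n-1$.

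First, I would set up the factorization. Let $\pi:S^{n-1}\to S^{n-1}/\mathbb Z_2 = \mathbb R P^{n-1}$ denote the antipodal quotient map. Since $\pi$ is a smooth covering map (in fact a local diffeomorphism) and $f$ is even, the set-theoretic map $\bar f:\mathbb R P^{n-1}\to \mathbb R$ defined by $\bar f([x])=f(x)$ is well defined. Using local sections of $\pi$ one checks that $\bar f$ is a $C^1$ function on the smooth manifold $\mathbb R P^{n-1}$: around any point $[x_0]\in \mathbb R P^{n-1}$ one has a small open neighborhood $U\subset S^{n-1}$ of $x_0$ such that $\pi|_U$ is a diffeomorphism onto $\pi(U)$, and on $\pi(U)$ the function $\bar f$ equals $f\circ (\pi|_U)^{-1}$, which is $C^1$.

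Next I would identify critical points. Because $\pi$ is a local diffeomorphism, the differential $d\pi_x$ is an isomorphism for every $x$, and by the chain rule applied to $f=\bar f\circ \pi$ one has
\begin{equation}
d f_x = d\bar f_{[x]}\circ d\pi_x.
\end{equation}
Consequently, $[x]\in\mathbb R P^{n-1}$ is a critical point of $\bar f$ if and only if $x$ is a critical point of $f|_{S^{n-1}}$, equivalently if and only if $-x$ is a critical point of $f|_{S^{n-1}}$ (which also follows directly from the evenness of $f$ via the formula \eqref{eq_def_crit}, since $\nabla f(-x)=-\nabla f(x)$). Thus critical points of $\bar f$ are in bijection with antipodal pairs $\{x,-x\}$ of critical points of $f$ on $S^{n-1}$.

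Finally, I would apply the Lusternik--Schnirelmann category theorem to $\bar f$ on the compact smooth manifold $\mathbb R P^{n-1}$: it yields at least $\cat(\mathbb R P^{n-1})+1=(n-1)+1=n$ distinct critical points of $\bar f$, and by the bijection above this produces $n$ distinct pairs of critical points of $f$. I do not expect a serious obstacle here, since both tools are imported as black boxes; the only thing that deserves care is the passage of $C^1$-regularity through the quotient and the critical-point bijection, both of which reduce to the fact that $\pi$ is a local diffeomorphism.
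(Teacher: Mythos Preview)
Your proposal is correct and follows essentially the same route as the paper: factor the even $C^1$ function through the quotient $\pi:S^{n-1}\to\mathbb R P^{n-1}$, apply the general Lusternik--Schnirelmann bound $\#\{\text{critical points}\}\geq\cat(M)+1$ together with $\cat(\mathbb R P^{n-1})=n-1$, and pull the resulting $n$ critical points back to antipodal pairs on $S^{n-1}$. The paper only sketches this argument in a sentence (referring to Rabinowitz for the $C^1$ details), whereas you spell out the regularity of $\bar f$ and the critical-point bijection via the local-diffeomorphism property of $\pi$; nothing in your write-up departs from the paper's approach.
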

The precise smoothness of the function $f$ varies in the literature.
A detailed proof for $C^1$ functions can be found in the work of Rabinowitz \cite[Theorem 4.1]{rabinowitz1973some}.

Theorem \ref{thm_LS} is topological in nature, meaning that no other structure of the set of critical points can be deduced, other than its cardinality.
To see this, consider a smooth even function $f:S^{n-1} \to \R$ with a finite number $m$ of critical points $x_1, \ldots, x_m \in S^{n-1}$, and any other set of points of the same cardinality, $y_1, \ldots, y_m \in S^{n-1}$.
It is known (see for example the Homogeneity Lemma in \cite{milnor1997topology}) that one can construct a (smooth, even) diffeomorphism $\varphi:S^{n-1} \to S^{n-1}$ taking $y_i$ to $x_i$.
Then the composition $f \circ \varphi$ will have exactly $y_1, \ldots, y_m$ as critical points.

To apply Theorem \ref{thm_LS} we will consider a specific function in the following lemma.
The proof is partly contained in \cite{patakova2022barycentric}. We include the proof for completeness.

\begin{lemma}
	\label{lem_differentiability}
	Consider a convex body $K \subseteq \R^n$, and the function $V:\R_+ \times S^{n-1} \to \R_+$ given by
	\[V(t,x) = \vol{\{y \in K: \langle y, x \rangle \geq t\} }.\]
	Then $V$ is $C^1$ on the set $\{(t,x) \in \R \times S^{n-1}: 0 < V(t,x) < \vol{K}\}$.

	Moreover, denoting
	\[H_{t,x} = \{y \in \R^n : \langle x, y \rangle = t\},\]
	we have
	\[\frac \partial{\partial t} V(t, x) = \vol[n-1]{K \cap H_{t,x}} \]
	and
	\[\nabla_x V(t, x) = \int_{K \cap H_{t,x}} z\, d z.\]
\end{lemma}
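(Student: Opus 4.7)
My plan is to reduce both partial derivatives to one-dimensional computations along the direction $x$ and then identify the boundary contributions on $H_{t,x}$. Throughout I will extend $V$ to $\R_+ \times (\R^n \setminus \{0\})$ by the same formula, so that $\nabla_x V$ is interpreted as an ordinary Euclidean gradient on $\R^n$ and afterwards restricted to $S^{n-1}$.

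For the $t$-derivative, fix $x \in S^{n-1}$ and apply Fubini with the orthogonal decomposition $y = sx + w$, $w \perp x$, to write
\[V(t, x) = \int_t^{\infty} A(s, x)\, ds, \qquad A(s, x) := \vol[n-1]{K \cap H_{s,x}}.\]
By Brunn's concavity theorem, $A(\cdot, x)^{1/(n-1)}$ is concave on the interval where it is positive, hence $A(\cdot, x)$ is continuous, and the fundamental theorem of calculus gives the stated expression for $\partial V/\partial t$. I will also need joint continuity of $A$ in $(s, x)$ on the open set $\{0 < V < \vol{K}\}$; this is true because there $H_{s,x}$ strictly separates two nonempty caps of $K$, so the convex section $K \cap H_{s,x}$ varies continuously in the Hausdorff metric and $(n-1)$-volume is Hausdorff-continuous on convex bodies.

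For the $x$-gradient I would write $V(t, x) = \int_K \mathbf{1}_{\langle y, x\rangle \geq t}\, dy$ and compute the directional derivative along an arbitrary $u \in \R^n$ by differentiating under the integral. The key pointwise bound is
\[\bigl|\mathbf{1}_{\langle y, x + \varepsilon u\rangle \geq t} - \mathbf{1}_{\langle y, x\rangle \geq t}\bigr| \leq \mathbf{1}_{\{|\langle y, x\rangle - t| \leq \varepsilon |\langle y, u\rangle|\}},\]
whose integral over $K$ is $O(\varepsilon)$ by Fubini in the direction $x$ together with continuity of $A(\cdot, x)$ at $t$. After dividing by $\varepsilon$, dominated convergence combined with the coarea formula, which rewrites the thin-slab integral as a weighted slice integral, yields
\[\partial_u V(t, x) = \int_{K \cap H_{t,x}} \langle y, u\rangle\, d\sigma(y),\]
and hence $\nabla_x V(t, x) = \int_{K \cap H_{t,x}} z\, dz$ for $x \in S^{n-1}$. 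Joint continuity in $(t, x)$ of this vector-valued moment follows from the same Hausdorff-continuity argument applied to the linear functional $y \mapsto y$.

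The main obstacle is the differentiation through the discontinuous indicator in the previous paragraph; this is exactly where the hypothesis $0 < V(t, x) < \vol{K}$ enters, forcing $H_{t,x}$ to meet the interior of $K$ and preventing the slice from degenerating to a lower-dimensional set, which would break both the dominated-convergence estimate and the coarea identification of the limiting slice integral.
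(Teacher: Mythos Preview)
Your approach is correct and takes a genuinely different route from the paper's. The paper handles the $t$ and $x$ increments together: it analyzes the symmetric difference $K_{t,x}\Delta K_{s,y}$ of the two caps by projecting to $x^\perp$ and measuring the signed chord length $h_{s,y}(z)$ along $x$, then uses Hausdorff-distance estimates between the sections to show $V(s,y)-V(t,x)=\int_{K\cap H_{t,x}} h_{s,y}(z)\,dz + o(|s-t|+\|y-x\|)$, and finally reads off both partial derivatives at once from the first-order Taylor expansion of $h_{s,y}$. Your plan instead decouples the two variables---Fubini plus the fundamental theorem of calculus for the $t$-derivative, and a thin-slab/coarea argument for the $x$-gradient---which is more modular and relies only on standard real-analysis tools, whereas the paper's argument is more hands-on and geometric but delivers the full linear approximation in a single computation. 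Two points to watch. First, your Fubini representation gives $\partial_t V=-A(t,x)$, so the displayed formula in the statement is off by a sign (the paper's own expansion, with its $-(s-t)$ term, confirms this). Second, in the $x$-derivative step the integrand $\varepsilon^{-1}\bigl(\mathbf 1_{\langle y,x+\varepsilon u\rangle\ge t}-\mathbf 1_{\langle y,x\rangle\ge t}\bigr)$ is not dominated by a fixed $L^1$ function on $K$, so dominated convergence cannot be applied to it directly; you must first slice along $x$ by coarea and rescale $s=\langle y,x\rangle - t=\varepsilon r$, after which dominated convergence in $r$ is legitimate because the integrand is bounded by $\sup_s \vol[n-1]{K\cap H_{s,x}}$ on a bounded $r$-interval. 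This is exactly what your sentence about rewriting the thin-slab integral as a weighted slice integral is pointing to, but make that order of operations explicit when you write it out.
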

\begin{proof}
	Consider $(s,y) \in \R \times S^{n-1}$ close to $(t,x)$.
	Since we are assuming that $V(t,x) \in (0, \vol{K})$, we may assume that $V(s,y)$ belongs to the same interval.
	Moreover, the surfaces $H_{s,y}$ and $\partial K$ intersect transversally (meaning that $H_{s,y}$ is not a supporting plane of $K$), and we can find a point $p \in x^\perp$ in the relative interior of $P_{x^\perp}(K \cap H_{s,y})$ for every $(s,y)$ close to $(t, x)$.

	Denote now for any $(s,y) \in \R \times S^{n-1}$, $K_{s,y} = \{z \in K: \langle z, y \rangle \geq s\}$.
	Let $d$ denote the Hausdorff distance between compact subsets of $\R^n$. We clearly have for some constant $c>0$,
	\begin{enumerate}[label=\alph*), ref=\alph*]
		\item \label{eq_DKH} $d(K_{t,x} \Delta K_{s,y}, K \cap H_{t,x}) \leq c \|x-y\|_2+c|t-s|$,
		\item \label{eq_H} $d(K \cap H_{t,x}, K \cap H_{s,y}) \leq c \|x-y\|_2+c|t-s|$.
	\end{enumerate}
	All the three properties are a consequence of the transversal intersection between $\partial K$ and $H_{t,x}$.
	(See Figure \ref{fig_estimates}.)
	\begin{figure}
		\includegraphics[scale=.4]{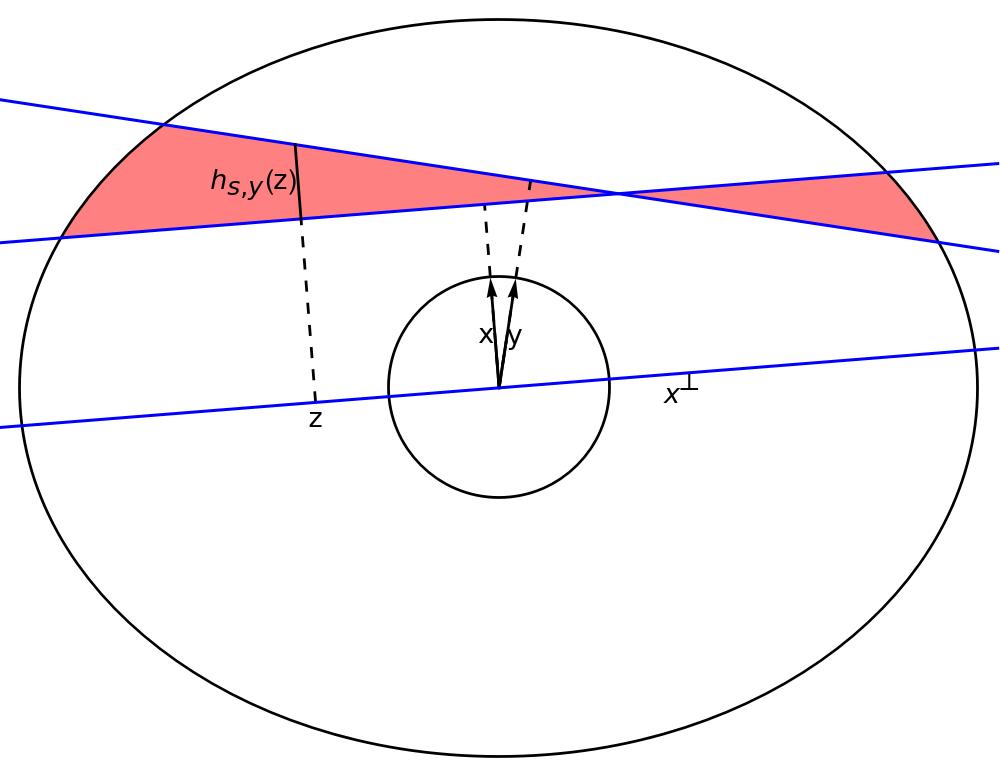}
		\caption{Sets $K_{s,y} \Delta K_{t,x}$ and $K \cap H_{t,x}, K \cap H_{s,y}$.}
		\label{fig_estimates}
	\end{figure}

	For $z \perp x$, define $h_{s,y}(z)$ by the property $z + t x - h_{s,y}(z) x \in H_{s,y}$.
	By \eqref{eq_DKH} we have 
	\begin{equation}
		\label{eq_h}
		|h_{s,y}| \leq c \|x-y\| + c |t-s|.
	\end{equation}
	Let $\rho_{s,y}$ be the radial function of $P_{x^\perp}[K \cap H_{s,y}]$ with respect to $p$, and similarly, let $\rho_{t,x}$ be the radial function of $P_{x^\perp}[K \cap H_{t,x}]$, with respect to $p$.

	For $z \in x^\perp$ denote $l(z)$ the length of the chord of $K_{t,x} \Delta K_{s,y}$ passing through $z$ and parallel to $x$.
	If $v \in S^{n-1} \cap x^\perp$ and $r \in (0, \min\{\rho_{t,x}(v), \rho_{s,y}(v)\})$ then we have both $p+r v + t x$ and $p + r v + t x - h_{s,y}(p+t v) x$ inside $K$ so the whole segment from $p+r v+tx$ to $p+r v + t x - h_{s,y}(p+t v) x$ belongs to $K$. Moreover, it belongs to $K_{t,x} \Delta K_{s,y}$.
	We deduce that $r \in (0, \min\{\rho_{t,x}(v), \rho_{s,y}(v)\})$ implies $l(p+r v) = |h_{s,y}(p+r v)|$.
	Now we can estimate
	\begin{align}
		V(s,y) - V(t,x)
		&= \vol{K_{s,y}} - \vol{K_{t,x}} \\
		&= \int_{K_{t,x} \Delta K_{s,y}} \chi_{K_{s,y}}(z) - \chi_{K_{t,x}}(z) dz\\
		&= \int_{S^{n-2}} \int_0^{\rho_{P_{x^\perp}[K_{t,x} \Delta K_{s,y}]}} r^{n-2} l(p+r v) \sign(h_{s,y}(p+r v))d r d v \\
		&= \int_{S^{n-2}} \int_0^{\min\{\rho_{t,x}(v), \rho_{s,y}(v)\}} h_{s,y}(p+r v) r^{n-2} d r d v \\
		&+ \int_{S^{n-2}} \int_{\min\{\rho_{t,x}(v), \rho_{s,y}(v)\}}^{\rho_{P_{x^\perp}[K_{t,x} \Delta K_{s,y}]}} r^{n-2} l(p+r v)\sign(h_{s,y}(p+r v)) d r dv,
	\end{align}
	where the last integral is less than $c\|x-y\|^2+c|t-s|^2$, in view of \eqref{eq_DKH} and \eqref{eq_h}.
	We get
	\begin{align}
		V(s,y) - V(t,x)
		&= \int_{S^{n-2}} \int_0^{\min\{\rho_{t,x}(v), \rho_{s,y}(v)\}} h_{s,y}(p+r v) r^{n-2} d r d v + o(\|x-y\|+|t-s|) \\
		&= \int_{S^{n-2}} \int_0^{\rho_{t,x}(v)} h_{s,y}(p+r v) r^{n-2} d r d v \\
		&+ \int_{S^{n-2}} \int_{\rho_{t,x}(v)}^{\min\{\rho_{t,x}(v), \rho_{s,y}(v)\}} h_{s,y}(p+r v) r^{n-2} d r d v + o(\|x-y\|+|t-s|) \\
	\end{align}
	Where again the last integral is less than $c \|x-y\|^2+c|t-s|^2$, in view of \eqref{eq_H} and \eqref{eq_h}. We get
	\begin{equation}
		 V(s,y) - V(t,x) = \int_{K \cap H_{t,x}} h_{s,y}(z) d z + o(\|x-y\|+|t-s|).
	\end{equation}
	Using the Taylor approximation of $h_{s,y}(z)$ at $(s,y) = (t,x)$, which is given by
	\begin{align}
		h_{s,y}(z)
		&= t - \frac{s-\langle z,y \rangle}{\langle x,y \rangle} \\
		&= - (s-t) + \langle z, y-x \rangle + \frac {\|x-y\|^2}{2 \langle x, y \rangle}(s-\langle z,y \rangle),
	\end{align}
	we get
	\[V(s,y) = V(t,x) + \int_{K \cap H_{t,x}} -(s-t) + \langle y-x, z \rangle d z + o(\|x-y\|+|t-s|),\]
	and the lemma is proven.

	Finally, once we know that the function $V$ is differentiable, we observe that the differential $(\frac {\partial}{\partial t} V(t,x), \nabla_x V(t,x) )$ is a continuous function of $(t,x)$, thanks to property \eqref{eq_H}.
	
\end{proof}

By the characterization \eqref{eq_def_crit} we obtain immediately the following corollary:
\begin{corollary}
	\label{cor_derivative}
	Let $h:S^{n-1} \to \R$ be a $C^1$ positive function and $K$ a convex body.
	Consider the function $V_{K,h}:S^{n-1} \to \R$ defined by
	\[V_{K,h}(v) = \vol{\{x \in K: \langle x, v \rangle \geq h(v) \}}.\]
	Then the $V_{K,h}$ is $C^1$ and its gradient is given by
	\[\nabla V_{K,h}(x) = -\nabla h(x) \vol{K \cap H_{x,h(x)}} +  P_{x^\perp}[\int_{K \cap H_{x,h(x)}} y d y ]. \]
\end{corollary}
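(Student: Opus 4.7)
The plan is to realize $V_{K,h}$ as the composition of the two-variable function $V(t,x)$ from Lemma \ref{lem_differentiability} with the map $x \mapsto (h(x), x)$, and then read off its gradient by the chain rule. Concretely, extend $h$ to a $C^1$ function $\tilde h$ on an open neighborhood of $S^{n-1}$ in $\R^n$ (for example, via the $0$-homogeneous extension $\tilde h(y) = h(y/\|y\|)$), so that the auxiliary function $W(x) := V(\tilde h(x), x)$ is defined on a neighborhood of $S^{n-1}$ and satisfies $W|_{S^{n-1}} = V_{K,h}$.

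Before applying the chain rule I would verify that the pair $(h(x), x)$ lies in the open set on which Lemma \ref{lem_differentiability} guarantees $V$ to be $C^1$, i.e. $0 < V(h(x), x) < \vol{K}$. Positivity of $h$ and the assumption that $K$ is a convex body make this automatic in the interesting range where $H_{x,h(x)}$ actually cuts through $K$; on the open set where the hyperplane misses $K$ the function $V_{K,h}$ is locally constant and the claimed formula is trivial. On the relevant open subset of $S^{n-1}$, the chain rule and Lemma \ref{lem_differentiability} then give
\begin{equation}
\nabla W(x) = \frac{\partial V}{\partial t}(h(x), x)\, \nabla \tilde h(x) + \nabla_x V(h(x), x) = -\vol[n-1]{K \cap H_{x,h(x)}}\, \nabla \tilde h(x) + \int_{K \cap H_{x,h(x)}} y\, dy,
\end{equation}
where the sign of $\partial_t V$ is the one actually produced by the calculation at the end of the proof of Lemma \ref{lem_differentiability}.

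To convert the ambient gradient $\nabla W(x)$ into the intrinsic spherical gradient of $V_{K,h}$, I would invoke the formula \eqref{eq_def_crit}, which says that the Riemannian gradient on $S^{n-1}$ is obtained by applying $P_{x^\perp}$ to the Euclidean gradient. Since $P_{x^\perp}$ is linear, this yields exactly the expression in the statement, with the understanding that $\nabla h(x)$ on $S^{n-1}$ is $P_{x^\perp}[\nabla \tilde h(x)]$ (this projection being harmless because $\tilde h$ was chosen $0$-homogeneous, so $\nabla \tilde h(x) \in x^\perp$ automatically). There is no genuine obstacle here beyond bookkeeping: the only step that requires care is tracking the sign of $\partial_t V$ and checking that one may indeed differentiate the composition, which reduces to the $C^1$ continuity of the partial derivatives established at the end of the proof of Lemma \ref{lem_differentiability}.
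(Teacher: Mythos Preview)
Your proposal is correct and follows exactly the route the paper has in mind: the paper deduces the corollary in one sentence from Lemma~\ref{lem_differentiability} together with the characterization~\eqref{eq_def_crit}, which is precisely the chain-rule-plus-projection argument you spell out. Your observation about the sign of $\partial_t V$ (taken from the actual Taylor expansion rather than the stated formula in Lemma~\ref{lem_differentiability}) and your handling of the region where the hyperplane misses $K$ are correct refinements that the paper leaves implicit.
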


Now we are ready to prove the main theorem:
\begin{proof}[Proof Theorem \ref{thm_main}]
	Let $f:S^{n-1} \to \R$ be the function defined by
	\[f(x) = V_{K,h_L(x)}(x).\]
	Since $L$ is strictly convex, $h_L$ is a $C^1$ function (see \cite[Section 2.5]{schneider2013convex} ).
	Thus, by Corollary \ref{cor_derivative}, it is a $C^1$ function.
	By the symmetry of $K$ and $L$, it is clear that $f$ is an even function.

	From Theorem \ref{thm_LS} we deduce the existence of $n$ pairs of critical points of $f$.

	Let $z$ be one of these critical points. We will prove that the supporting hyperplane of $L$, $H = H_{z,h_L(z)}$ touches $\partial L$ at the centroid of the section.

	By Corollary \ref{cor_derivative} and the fact that $\nabla V_{K,h}(z) = 0$,
	\begin{align}
		\nabla (\left.h_L\right|_{S^{n-1}})(z)
		&= P_{z^\perp}[\frac 1{\vol[n-1]{K \cap H}} \int_{K \cap H} y dy] \\
		&= P_{z^\perp}[\c(K \cap H)].
	\end{align}
	On the other hand, since $h_L$ is homogeneous of degree $1$,
	\begin{align}
		\nabla h_L(z)
		&= h_L(z) z + \nabla (\left.h_L\right|_{S^{n-1}})(z) \\
		&= P_z[\c(K \cap H)] + P_{z^\perp}[\c(K \cap H)] \\
		&= \c(K \cap H).
	\end{align}
	It is known (see \cite[Corollary 1.7.3]{schneider2013convex} that $\nabla h_L(z)$ is the unique point in $\partial L$ which intersects the supporting hyperplane of $L$ perpendicular to $z$, which is exactly $H_{z,h_L(z)}$.
	Then $\c(K \cap H_{z,h_L(z)}) = \partial L \cap H_{z,h_L(z)}$ and the theorem is proven.
	
\end{proof}

\section*{Acknowledgments}

The authors were partially supported by Ministry of Science and Innovation, Spain, project PID2022-136320NB-I00. The first named author was supported by Grant RYC2021 - 031572 -I, funded by the Ministry of Science and Innovation / State Research Agency /10.13039 / 501100011033 and by the E.U. Next Generation EU/Recovery, Transformation and Resilience Plan.

\bibliographystyle{abbrv}
\bibliography{references}

\bigskip\bigskip

\noindent\textsc{Departamento de Análisis Matemático, Universidad de Sevilla}\\
\textsc{C/ Tarfia s/n, Sevilla 41012, Spain}

\medskip

\noindent\textit{Email addresses:}\\
J.~Haddad: \texttt{jhaddad@us.es}\\
C.~H.~Jiménez: \texttt{carloshugo@us.es}\\
R.~Villa: \texttt{villa@us.es}
\end{document}